\documentclass[10pt]{article}

\usepackage{amsmath,amsthm,amssymb,bbm}
\usepackage[bookmarksnumbered=true,pagebackref]{hyperref}
\usepackage{tocbibind}
\usepackage{enumitem}

\newtheorem{theorem}{Theorem}

\newtheorem{lemma}{Lemma}

\theoremstyle{remark}
\newtheorem{remark}{Remark}

\theoremstyle{definition}

\newtheoremstyle{notes}
{3pt}
{3pt}
{}
{}
{\bfseries}
{:}
{.4em}
{}
\theoremstyle{notes}
\newtheorem*{keywords}{Keywords}
\newtheorem*{subjclass}{AMS MSC 2020}

\newcommand{\StirlingI}[2]{\genfrac{[}{]}{0pt}{}{#1}{#2}}

\usepackage[disable]{todonotes} 
\overfullrule=2cm

\title{Sylvester's problem for random walks and bridges}

\author{
Hugo Panzo
\\ 
\href{mailto:hugo.panzo@slu.edu}{\texttt{{\small hugo.panzo@slu.edu}}}
}

\date{\today}

\begin{document}

\maketitle

\begin{abstract}
Consider a random walk in $\mathbb{R}^d$ that starts at the origin and whose increment distribution assigns zero probability to any affine hyperplane. We solve Sylvester's problem for these random walks by showing that the probability that the first $d+2$ steps of the walk are in convex position is equal to $1-\frac{2}{(d+1)!}$. The analogous result also holds for random bridges of length $d+2$, so long as the joint increment distribution is exchangeable. 
\end{abstract}

\begin{keywords}
Convex hull, random bridge, random walk, Sylvester's problem. 
\end{keywords}

\begin{subjclass}
Primary 52A22, 60D05, 60G50; Secondary 52B05, 52B11.
\end{subjclass}


\section{Introduction}

The convex hull of any $d+1$ points in general position in $\mathbb{R}^d$ must be a $d$-simplex with each point being a vertex. Said differently, any $d+1$ points in general position in $\mathbb{R}^d$ are always in convex position. On the other hand, if we add an additional point while maintaining general position, two possibilities for the convex hull of these $d+2$ points now arise: a convex $d$-polytope with all $d+2$ points being vertices, or a $d$-simplex with only $d+1$ vertices. In other words, $d+2$ points in general position in $\mathbb{R}^d$ need not be in convex position.

When the $d+2$ points in general position in $\mathbb{R}^d$ are ``taken at random'', the problem of determining the probability that they are in convex position is often referred to as \emph{Sylvester's problem}. Sylvester's original formulation of the question, which concerned four points in the plane and first appeared in 1864 in \cite{Sylvester}, was ill-posed since the meaning of ``taken at random'' was never specified. This resulted in a variety of proposed answers, none of which could be determined to be \emph{the} answer; see \cite{Sylvester_history} for a historical account of the problem.

The most popular formulation of Sylvester's problem involves picking the $d+2$ points independently and uniformly at random from some convex set $K$ in $\mathbb{R}^d$. For example, Woolhouse and Sylvester, respectively, showed that the probability in question was $1-\frac{35}{12\pi^2}\approx 0.7045$ for disks and $\frac{2}{3}$ for triangles; see \cite{Sylvester_history}. As for higher dimensions, Kingman \cite[Theorem 7]{Kingman} was able to derive the expression 
\[
1-\left. \frac{d+2}{2^d}\binom{d+1}{\frac{1}{2}(d+1)}^{d+1} \middle/\binom{(d+1)^2}{\frac{1}{2}(d+1)^2} \right. 
\]
for the relevant probability when $K$ was a $d$-dimensional ball.

Another natural formulation of Sylvester's problem calls for picking the $d+2$ random points independently from the standard $d$-dimensional normal distribution. This problem was solved in the planar setting by Maehara \cite{Maehara}, and later on by Blatter \cite{Blatter}, who each used different methods to obtain the answer
\[
\frac{6}{\pi}\arcsin\left(\frac{1}{3}\right)\approx 0.6490.
\]
In the recent preprint \cite{Youden}, Frick, Newman, and Pegden were able to solve this Gaussian formulation of Sylvester's problem for $d=3$ by connecting it to \emph{Youden's demon problem} by way of \emph{Gale duality}. This allowed them to use an old result of David \cite{David} to deduce that the probability of five i.i.d.~standard trivariate Gaussian points in $\mathbb{R}^3$ being in convex position is
\[
\frac{1}{2}+\frac{5}{\pi}\arcsin\left(\frac{1}{4}\right)\approx 0.9022.
\]
Moreover, they used this connection to determine the large $d$ asymptotic of the complementary probability. That is, they showed that the probability of $d+2$ i.i.d.~standard multivariate Gaussian points in $\mathbb{R}^d$ \emph{not} being in convex position decays like
\[
2\sqrt{\frac{e^d}{2(d+2)^{d-2}(2\pi)^{d+1}}}~\text{ as }~d\to\infty.
\]

The present paper investigates an apparently new formulation of Sylvester's problem which is no less natural than the previous two. In this version, the $d+2$ random points are the initial steps of a random walk or bridge in $\mathbb{R}^d$. We solve this problem for all $d$ by deriving a simple formula for the probability that these points are in convex position. Remarkably, the formula is independent of the distribution of the increments of the walk---so long as it satisfies a mild condition which ensures that the steps are in general position. In fact, the same formula also holds for random bridges of length $d+2$, provided that the joint distribution of the increments is exchangeable. 

Even in the simple case of a random walk in $\mathbb{R}^2$ with i.i.d.~standard bivariate Gaussian increments, it seems difficult to derive our formula by direct calculation. Hence, in order to prove our theorems, we must appeal to some rather deep results of Kabluchko, Vysotsky, and Zaporozhets on convex hulls of random walks and bridges \cite{hyperplane,faces}; see Section \ref{sec:face_probabilities} for a description of the particular results that we use.


\section{Main results}

Our first main result concerns a random walk in $\mathbb{R}^d$ with independent and identically distributed increments. More precisely, let $X_1,X_2, \dots$ be an i.i.d.~sequence of random vectors in $\mathbb{R}^d$. Then the random walk $S=\{S_i:i\geq 0\}$ is defined by 
\[
S_i=X_1+\cdots +X_i,~~~i\geq 1,~~~S_0=0.
\]
The only assumption on the distribution of the increments is that it is sufficiently diffuse to avoid assigning positive probability to any affine hyperplane. 
\begin{enumerate}
\item[$(\text{Hy})$] \textit{Hyperplane condition:} $\mathbb{P}(X_1\in \mathcal{H})=0$ for every affine hyperplane $\mathcal{H}\subset \mathbb{R}^d$.
\end{enumerate}

\begin{theorem}\label{thm:walk}
Let $S$ be a random walk in $\mathbb{R}^d$ whose increment distribution satisfies the condition (Hy). Then the probability that the $d+2$ points $S_0, \dots, S_{d+1}$ are in convex position is $1-\frac{2}{(d+1)!}$.
\end{theorem}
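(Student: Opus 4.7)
My plan is to reduce the convex-position probability to the expected number of vertices of the random convex hull $C_{d+1} := \operatorname{conv}(S_0, \ldots, S_{d+1})$, and then to evaluate that expectation using the face-probability formulas of Kabluchko, Vysotsky, and Zaporozhets (KVZ).

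First, hypothesis (Hy) together with the independence of the $X_i$'s implies that $S_0, \ldots, S_{d+1}$ are almost surely in general position in $\mathbb{R}^d$: the event that some $d+1$ of the points lie on a common affine hyperplane reduces, after conditioning on all but one of the increments, to an event of the form $\{X_j \in \mathcal{H}\}$ for a (random) affine hyperplane $\mathcal{H}$, which has probability zero by (Hy). Next, a standard dichotomy says that any $d+2$ points in general position in $\mathbb{R}^d$ either form the vertex set of a convex $d$-polytope (the points are in convex position) or else their convex hull is a $d$-simplex containing exactly one of them in its interior; in particular the intermediate case is impossible because the full set of $d+2$ points must affinely span $\mathbb{R}^d$. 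Writing $f_0(C_{d+1})$ for the number of vertices and $p$ for the convex-position probability, this gives $f_0(C_{d+1}) \in \{d+1, d+2\}$ a.s., and taking expectations yields
\[
\mathbb{E}[f_0(C_{d+1})] = (d+2)p + (d+1)(1-p) = d+1+p,
\]
so the theorem is equivalent to the identity $\mathbb{E}[f_0(C_{d+1})] = d+2 - \tfrac{2}{(d+1)!}$.

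To establish this identity I would invoke the KVZ formula, to be recalled in Section~\ref{sec:face_probabilities}, which for a random walk in $\mathbb{R}^d$ whose i.i.d.\ increments satisfy (Hy) expresses $\mathbb{E}[f_k(C_n)]$ as a distribution-free, finite sum of unsigned Stirling numbers of the first kind subject to a parity constraint on the summation index. Specializing to $k=0$ and $n = d+1$, I expect the sum to collapse at these boundary parameters, leaving a single surviving contribution that reduces the formula to $d+2 - \tfrac{2}{(d+1)!}$.

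\textbf{Main obstacle.} The general-position check and the dichotomy above are essentially bookkeeping; the real work lies in the combinatorial simplification of the KVZ sum at the extremal parameters $n = d+1$, $k = 0$. Concretely, one must show that exactly one Stirling-number term survives and contributes $\tfrac{2}{(d+1)!}$ to the complementary probability. I expect this to follow from the vanishing of $\StirlingI{d+2}{j}$ outside the range $1 \le j \le d+2$, combined with the KVZ parity constraint singling out a unique admissible index; pinning down this simplification cleanly is where I anticipate the main difficulty.
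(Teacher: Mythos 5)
Your setup is identical to the paper's: general position from (Hy), the dichotomy $f_0(\mathcal{C}_{d+1})\in\{d+1,d+2\}$, the resulting linear equation relating $\mathbb{E}[f_0(\mathcal{C}_{d+1})]$ to the convex-position probability, and the appeal to the Kabluchko--Vysotsky--Zaporozhets formula specialized to $k=0$, $n=d+1$. All of that is correct. The gap is precisely at the step you flag as the ``main obstacle,'' and your anticipated resolution of it is wrong. The KVZ sum at these parameters is
\[
\mathbb{E}[f_0(\mathcal{C}_{d+1})]=\frac{2}{(d+1)!}\left(\StirlingI{d+2}{d}+\StirlingI{d+2}{d-2}+\cdots\right),
\]
and this sum does \emph{not} collapse to a single term: the indices $d, d-2, d-4, \dots$ run down to $1$ or $2$, all of which give nonzero Stirling numbers (e.g.\ for $d=3$ the sum is $\StirlingI{5}{3}+\StirlingI{5}{1}=35+24=59$). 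So there is no ``unique admissible index,'' and the vanishing of $\StirlingI{d+2}{j}$ outside $1\le j\le d+2$ does not reduce the sum to one contribution.

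What actually makes the evaluation work is a closed form for the full parity-restricted sum: for $n\ge 2$,
\[
\StirlingI{n}{n}+\StirlingI{n}{n-2}+\cdots=\frac{n!}{2},
\]
which follows by evaluating the generating polynomial $\sum_{k}\StirlingI{n}{k}t^k=t(t+1)\cdots(t+n-1)$ at $t=1$ and $t=-1$ and averaging (the product vanishes at $t=-1$ once $n\ge 2$). Applying this with $n=d+2$, the sum appearing in the KVZ formula is the full parity sum minus its single missing top term $\StirlingI{d+2}{d+2}=1$, giving $\frac{(d+2)!}{2}-1$ and hence $\mathbb{E}[f_0(\mathcal{C}_{d+1})]=d+2-\frac{2}{(d+1)!}$, as you need. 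So it is one term being \emph{subtracted off} a half-factorial identity, not one term surviving. Without this identity your argument stops at an unevaluated sum, so as written the proof is incomplete; supplying the lemma above closes it and recovers exactly the paper's argument.
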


\begin{remark}
As discussed in Remark \ref{rem:general_position}, the condition (Hy) is sufficient to ensure that the points $S_0,\dots,S_{d+1}$ are in general position almost surely.

\end{remark}

Our second main result is about random bridges in $\mathbb{R}^d$ of length $d+2$. These are basically random walks that start at the origin, take $d+1$ random steps, and then end back at the origin on step $d+2$. Let $\xi_1,\dots, \xi_{d+2}$ be a sequence of random vectors in $\mathbb{R}^d$, not necessarily independent. Then the random bridge $B=\{B_i:0\leq i\leq d+2\}$ of length $d+2$ is defined by 
\[
B_i=\xi_1+\cdots +\xi_i,~~~1\leq i\leq d+2,~~~B_0=0.
\]
Moreover, the following conditions are imposed on the joint distribution of the increments.
\begin{enumerate}
\item[$(\text{Br})$] \textit{Bridge property:} $B_{d+2}=\xi_1+\cdots +\xi_{d+2}=0$ almost surely;
\item[$(\text{Ex})$] \textit{Exchangeability:} For every permutation $\sigma$ of the set $\{1,\dots, d+2\}$, the distribution of $(\xi_{\sigma(1)},\dots, \xi_{\sigma(d+2)})$ is the same as that of $(\xi_1,\dots, \xi_{d+2})$;
\item[$(\text{GP})$] \textit{General position:} For every subsequence of indices $1\leq i_1<\cdots<i_d\leq d+1$ of length $d$, the probability that the vectors $B_{i_1},\dots,B_{i_d}$ are linearly dependent is $0$.
\end{enumerate}

\begin{theorem}\label{thm:bridge}
Let $B$ be a random bridge in $\mathbb{R}^d$ of length $d+2$, and whose joint increment distribution satisfies the conditions (Br), (Ex), and (GP). Then the probability that the $d+2$ points $B_1, \dots, B_{d+2}=B_0$ are in convex position is $1-\frac{2}{(d+1)!}$.
\end{theorem}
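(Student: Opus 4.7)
\emph{Step 1: Reduce to an expected vertex count.} I would first observe that conditions (GP) and (Br) imply that $B_0,B_1,\dots,B_{d+1}$ are in general position in $\mathbb{R}^d$ almost surely. Hence, by Radon's theorem applied in general position, the convex hull of these $d+2$ points is either a $d$-simplex containing exactly one of the points in its relative interior (the ``not in convex position'' case) or a $d$-polytope with all $d+2$ points as vertices (the ``in convex position'' case). Letting $f_0$ denote the number of vertices of $\mathrm{conv}(B_0,\dots,B_{d+1})$, this dichotomy gives $f_0\in\{d+1,d+2\}$ almost surely, and $\mathbb{E}[f_0]=(d+1)+\mathbb{P}(\text{convex position})$. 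So the goal will reduce to showing $\mathbb{E}[f_0]=d+2-\tfrac{2}{(d+1)!}$.

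\emph{Step 2: Exploit cyclic exchangeability.} From (Br) and (Ex) I would deduce that the bridge is cyclically exchangeable in the following sense: for each $k\in\{0,\dots,d+1\}$, the translated shift $\bigl(B_{(k+i)\bmod(d+2)}-B_k\bigr)_{i=0}^{d+1}$ has the same joint law as $(B_i)_{i=0}^{d+1}$ (this is immediate from permuting the first $k$ increments to the end). Because ``being the unique interior point of the Radon split'' is invariant under both translation and relabeling, each of the $d+2$ vertices is equally likely to play that role, yielding
\[
\mathbb{P}(\text{not in convex position})=(d+2)\,\mathbb{P}\bigl(0\in\mathrm{int}\,\mathrm{conv}(B_1,\dots,B_{d+1})\bigr).
\]
It then suffices to show that the right-hand probability equals $\tfrac{2}{(d+2)!}$.

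\emph{Step 3: Invoke the KVZ face-probability formulas.} To compute this remaining probability I would appeal to the Kabluchko--Vysotsky--Zaporozhets results for convex hulls of exchangeable random bridges surveyed later in the paper. Those results provide a distribution-free, explicit combinatorial formula (in terms of Stirling numbers of the first kind, matching the \verb|\StirlingI| notation introduced in the preamble) for face probabilities of $\mathrm{conv}(B_0,\dots,B_{n-1})$ under exactly the hypotheses (Br), (Ex), (GP). In the specific regime of interest here ($n=d+2$, ambient dimension $d$, and the $0$-face consisting of the origin), I expect the KVZ sum to collapse to $\tfrac{2}{(d+2)!}$; combining with Step~2 then gives $\mathbb{P}(\text{not in convex position})=\tfrac{2}{(d+1)!}$.

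\emph{Anticipated obstacle.} The crux will be Step 3: isolating the single KVZ term relevant to the $0$-face at the origin, and verifying that the resulting Stirling-number expression simplifies to $\tfrac{2}{(d+2)!}$ for the edge parameter $n=d+2$. A tempting alternative would be a Wendel-style sign-flip argument to compute $\mathbb{P}(0\in\mathrm{int}\,\mathrm{conv}(B_1,\dots,B_{d+1}))$ directly, but this is blocked by the non-independence of the bridge increments---the KVZ framework, which is tailored to exchangeable partial sums, appears to be the right tool.
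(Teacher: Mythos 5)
Your plan is correct and follows essentially the same route as the paper: general position forces the vertex count into $\{d+1,d+2\}$, exchangeability makes every point equally likely to be the unique non-vertex, and the Kabluchko--Vysotsky--Zaporozhets bridge formula yields $\mathbb{P}\big(B_i\notin\mathcal{F}_0(\mathcal{C}_{d+2})\big)=\tfrac{2}{(d+2)!}$. The one step you leave unverified---the collapse of the Stirling sum---is precisely the paper's lemma, obtained by evaluating $\sum_{k}\StirlingI{n}{k}t^k=t(t+1)\cdots(t+n-1)$ at $t=\pm 1$ to get $\StirlingI{d+2}{d+2}+\StirlingI{d+2}{d}+\cdots=\tfrac{(d+2)!}{2}$, so the KVZ sum equals $\tfrac{(d+2)!}{2}-1$ and your target probability is indeed $\tfrac{2}{(d+2)!}$.
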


\todo{Discuss possible reasons for probabilities being the same.}

\begin{remark}
An interesting question is what further conditions, if any, are required in order to guarantee that conditioning (in the appropriate sense) a random walk which satisfies (Hy) to return to the origin on step $d+2$ results in a random bridge that satisfies (Ex) and (GP)?
\end{remark}


\section{Expected number of faces and face probabilities}\label{sec:face_probabilities}

As mentioned in the introduction, the proofs of Theorems \ref{thm:walk} and \ref{thm:bridge} rely on some impressive results of \cite{hyperplane,faces}. More specifically, for Theorem \ref{thm:walk}, we need a formula for the expected number of $0$-faces of the convex hull of the steps $S_0,\dots,S_n$ of a random walk in $\mathbb{R}^d$. Note that the number of $0$-faces of the convex hull is nothing but the number of vertices that it has. The full result, that is \cite[Theorem 1.2]{faces}, actually gives a formula for the expected number of $k$-faces for $0\leq k\leq d-1$. 

Let $\mathcal{C}_n$ denote the convex hull of the set of points $\{S_0,\dots,S_n\}$, and let $f_k(\mathcal{C}_n)$ denote the number of $k$-faces of $\mathcal{C}_n$. As long as the increment distribution of the random walk $S$ satisfies (Hy), then \cite[Remark 1.3]{faces} provides the desired formula, namely, 
\begin{equation}\label{eq:mean_vertices}
\mathbb{E}\left[f_0(\mathcal{C}_n)\right]=\frac{2}{n!}\left(\StirlingI{n+1}{d}+\StirlingI{n+1}{d-2}+\cdots\right).
\end{equation}
Here the $\StirlingI{n}{m}$'s denote \emph{signless Stirling numbers of the first kind}. Since we employ the convention that $\StirlingI{n}{m}=0$ whenever $m\leq 0$ or $m>n$, the sum appearing on the right-hand side of \eqref{eq:mean_vertices} has only finitely many nonzero terms. 

\begin{remark}\label{rem:general_position}
Some comments are in order on the assumptions that we make in the present paper versus the assumptions made in Theorem 1.2 of \cite{faces}. The random walks considered in \cite{faces} are allowed to have dependent increments. Consequently, a different set of assumptions are imposed. However, in case the increments are i.i.d., such as in the present paper, all that is needed is the condition (Hy); see \cite[Example 1.1]{faces}. Indeed, the proof of \cite[Proposition 2.5]{hyperplane} shows that for a random walk with i.i.d.~increments, the condition (Hy) is sufficient to ensure that any $d$ of the vectors $S_1,S_2,\dots$ will be linearly independent almost surely. 
\end{remark}

For Theorem \ref{thm:bridge}, we need a formula for the probability that a particular step of a random bridge in $\mathbb{R}^d$ with $d+2$ steps happens to be a $0$-face of the convex hull of that bridge. Let $\mathcal{F}_0(\mathcal{C}_{d+2})$ denote the set of $0$-faces of the convex hull of the points $B_1,\dots,B_{d+2}=B_0$. Then assuming that the joint increment distribution of the random bridge $B$ satisfies the conditions (Br), (Ex), and (GP), we have by Remark 1.13 of \cite{faces} the formula
\begin{equation}\label{eq:vertex_probability}
\mathbb{P}\big(B_i\in\mathcal{F}_0(\mathcal{C}_{d+2})\big)=\frac{2}{(d+2)!}\left(\StirlingI{d+2}{d}+\StirlingI{d+2}{d-2}+\dots\right),~0\leq i\leq d+2.
\end{equation}

Similarly to formula \eqref{eq:mean_vertices}, the sum appearing on the right-hand side of \eqref{eq:vertex_probability} has only finitely many nonzero terms. Moreover, it should be noted that the right-hand side is independent of the index $i$, that is, each step of $B$ has equal probability of being a vertex of the convex hull of $B$. This property can also be deduced from the cyclic exchangeability that results from imposing the condition (Ex); see \cite[Remark 1.13]{faces}. It should also be mentioned that the full result is much more general and actually computes the probability that the convex hull of any set of $k+1$ steps of $B$ happens to be a $k$-face of the convex hull of $B$; see \cite[Theorem 1.11]{faces}.


\section{Proofs}

The proofs of Theorems \ref{thm:walk} and \ref{thm:bridge} both require a simple identity for a certain sum of signless Stirling numbers of the first kind that is similar to the sums appearing in formulas \eqref{eq:mean_vertices} and \eqref{eq:vertex_probability}. Recall that for $0\leq m\leq n$, $\StirlingI{n}{m}$ counts the number of permutations of $\{1,\dots,n\}$ that contain exactly $m$ cycles; see \cite[Section 1.3]{StanleyI}. It follows that $\StirlingI{n}{n}+\StirlingI{n}{n-1}+\cdots+\StirlingI{n}{0}=n!$. The next lemma shows what happens to this sum if we exclude every other term. 

\begin{lemma}
Let $n\geq 2$ be an integer. Then we have
\begin{equation}\label{eq:StirlingI}
\StirlingI{n}{n}+\StirlingI{n}{n-2}+\cdots =\frac{n!}{2}.
\end{equation}
\end{lemma}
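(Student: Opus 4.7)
My plan is to recognize the left-hand side of \eqref{eq:StirlingI} as counting even permutations, and then invoke the standard fact that $|A_n|=n!/2$ for $n\geq 2$.

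The key identity is that the sign of a permutation $\pi\in S_n$ with $c(\pi)$ cycles is $\mathrm{sgn}(\pi)=(-1)^{n-c(\pi)}$, which is standard (a $k$-cycle is a product of $k-1$ transpositions). Using the combinatorial interpretation of $\StirlingI{n}{m}$ as the number of $\pi\in S_n$ with exactly $m$ cycles, we have
\[
\StirlingI{n}{n}+\StirlingI{n}{n-2}+\cdots =\sum_{\substack{0\leq m\leq n\\ m\equiv n\,(\mathrm{mod}\,2)}}\StirlingI{n}{m}=\#\{\pi\in S_n:\mathrm{sgn}(\pi)=1\}=|A_n|,
\]
and for $n\geq 2$ the alternating group has order $n!/2$. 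This would complete the proof.

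As a cross-check, I would also mention the generating-function derivation: starting from $\sum_{m=0}^n\StirlingI{n}{m}x^m=x(x+1)\cdots(x+n-1)$, evaluation at $x=1$ gives $n!$, and evaluation at $x=-1$ gives $0$ for $n\geq 2$ (since one factor vanishes). Taking $\tfrac{1}{2}$ of the sum or difference of these two values, according to the parity of $n$, isolates exactly the sum in \eqref{eq:StirlingI} and returns $n!/2$. Either argument is short; I would lead with the combinatorial one since it is the more illuminating.

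There is no serious obstacle here; the only point requiring a sentence of care is the parity matching (the sum keeps every other term \emph{starting from $\StirlingI{n}{n}$}, so the indices retained are precisely those $m$ with $m\equiv n\pmod 2$, which is exactly the condition $\mathrm{sgn}(\pi)=1$). The hypothesis $n\geq 2$ is needed only because $|A_1|=1\neq 1/2$; for $n=1$ the identity fails, consistent with the lemma's statement.
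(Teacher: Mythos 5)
Your proof is correct, but your lead argument is genuinely different from the paper's. The paper proves the lemma purely via the generating function $\sum_{k=0}^n\StirlingI{n}{k}t^k=t(t+1)\cdots(t+n-1)$, evaluating at $t=\pm 1$ and taking half the sum or difference according to the parity of $n$ --- which is precisely what you relegate to your ``cross-check.'' Your primary argument instead observes that $\mathrm{sgn}(\pi)=(-1)^{n-c(\pi)}$, so the terms retained in the sum (those with $m\equiv n\pmod 2$) count exactly the even permutations, and $|A_n|=n!/2$ for $n\geq 2$. This is sound: the parity bookkeeping is right, and your remark about why $n\geq 2$ is needed ($|A_1|=1\neq 1/2$, equivalently $A_1=S_1$) correctly locates the role of the hypothesis. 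The combinatorial route is arguably more illuminating --- it explains \emph{why} the answer is half of $n!$, namely that the alternating sum $\sum_m(-1)^{n-m}\StirlingI{n}{m}$ vanishes because $A_n$ has index $2$ in $S_n$ --- whereas the paper's generating-function route is slightly more mechanical but requires no appeal to the cycle-decomposition formula for the sign and packages the even/odd parity split into a single two-line computation. Either is acceptable; since you also supply the paper's argument verbatim as a backup, there is nothing missing.
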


\begin{proof}
We prove \eqref{eq:StirlingI} using Proposition 1.3.7 of \cite{StanleyI}, which establishes the identity
\[
\sum_{k=0}^n\StirlingI{n}{k}t^k=\underbrace{t(t+1)\cdots (t+n-1)}_{\displaystyle R_n(t)}.
\]
Evidently, $R_n(1)=n!$ and $R_n(-1)=0$ when $n\geq 2$. Hence, for even $n\geq 2$, we have
\begin{align*}
\StirlingI{n}{n}+\StirlingI{n}{n-2}+\cdots &=\frac{1}{2}\left(\StirlingI{n}{n}+\StirlingI{n}{n-1}+\cdots +  \StirlingI{n}{n}-\StirlingI{n}{n-1}+\cdots\right)\\
&=\frac{1}{2}\big(R_n(1)+R_n(-1)\big)\\
&=\frac{n!}{2}.
\end{align*}
Similarly, when $n\geq 2$ is odd, we have
\begin{align*}
\StirlingI{n}{n}+\StirlingI{n}{n-2}+\cdots 
&=\frac{1}{2}\big(R_n(1)-R_n(-1)\big)\\
&=\frac{n!}{2}.
\end{align*}

\end{proof}

\subsection{Proof of Theorem \ref{thm:walk}}

\begin{proof}[Proof of Theorem \ref{thm:walk}]
Following Remark \ref{rem:general_position}, any $d$ of the vectors $S_1,\dots,S_{d+1}$ are linearly independent almost surely. Hence, any $d+1$ of the vectors $S_0,\dots,S_{d+1}$ are affinely independent. This shows that the points $S_0,\dots,S_{d+1}$ are in general position almost surely. As mentioned in the first paragraph of the Introduction, we now have two possibilities for the convex hull of these $d+2$ points: a convex $d$-polytope with all $d+2$ points being vertices, or a $d$-simplex with only $d+1$ vertices. Let $p$ denote the probability of the latter event. Then using the notation from Section \ref{sec:face_probabilities}, it is straightforward to deduce that
\begin{align}
\mathbb{E}\left[f_0(\mathcal{C}_{d+1})\right]&=(1-p)(d+2)+p(d+1)\nonumber \\
&=d+2-p.\label{eq:p_equation}
\end{align}

On the other hand, formula \eqref{eq:mean_vertices} and identity \eqref{eq:StirlingI} allow us to write 
\begin{align}
\mathbb{E}\left[f_0(\mathcal{C}_{d+1})\right]&=\frac{2}{(d+1)!}\left(\StirlingI{d+2}{d}+\StirlingI{d+2}{d-2}+\cdots\right)\nonumber \\
&=\frac{2}{(d+1)!}\left(\StirlingI{d+2}{d+2}+\StirlingI{d+2}{d}+\cdots\right)-\frac{2}{(d+1)!}\StirlingI{d+2}{d+2}\nonumber \\
&=d+2-\frac{2}{(d+1)!}.\label{eq:mean_equation}
\end{align}

Equating \eqref{eq:p_equation} and \eqref{eq:mean_equation} leads to
\[
p=\frac{2}{(d+1)!},
\]
from which the desired result follows.
\end{proof}

\subsection{Proof of Theorem \ref{thm:bridge}}

\begin{proof}[Proof of Theorem \ref{thm:bridge}]
By using the conditions (Br) and (GP) and repeating the argument from the proof of Theorem \ref{thm:walk}, we get that almost surely, the $d+2$ points $B_1,\dots, B_{d+2}=B_0$ are in general position. Moreover, with $p$ denoting the probability that the $d+2$ points $B_1,\dots, B_{d+2}=B_0$ are not in convex position, we similarly recover the formula
\begin{equation*}
\mathbb{E}\left[f_0(\mathcal{C}_{d+2})\right]=d+2-p.
\end{equation*}

Next, let $E_i$ denote the event $\{ B_i\in\mathcal{F}_0(\mathcal{C}_{d+2})\}$. Now we can use formula \eqref{eq:vertex_probability} and then repeat the calculations leading to \eqref{eq:mean_equation} using identity \eqref{eq:StirlingI} to deduce that
\begin{align*}
\mathbb{E}\left[f_0(\mathcal{C}_{d+2})\right]=\mathbb{E}\left[\sum_{i=1}^{d+2}\mathbbm{1}_{E_i}\right]&=\sum_{i=1}^{d+2}\mathbb{P}\big(B_i\in\mathcal{F}_0(\mathcal{C}_{d+2})\big)\\
&=(d+2)\Bigg(\frac{2}{(d+2)!}\left(\StirlingI{d+2}{d}+\StirlingI{d+2}{d-2}+\dots\right)\Bigg)\\
&=\frac{2}{(d+1)!}\left(\StirlingI{d+2}{d}+\StirlingI{d+2}{d-2}+\dots\right)\\
&=d+2-\frac{2}{(d+1)!}.
\end{align*}

Just like in the proof of Theorem \ref{thm:walk}, it follows that the probability the $d+2$ points $B_1,\dots, B_{d+2}=B_0$ are in convex position is equal to $1-\frac{2}{(d+1)!}$.
\end{proof}

\bibliography{Sylvester_bib}
\bibliographystyle{amsplainabbrev}

\end{document}